\def\pd#1#2{\frac{\partial#1}{\partial#2}}
\newtheorem{theorem}{Theorem}[section]
\newtheorem{exa}[theorem]{Example}
\newtheorem{rem}[theorem]{Remark}
\newenvironment{remark}{\begin{rem} \em}{\end{rem}}
\newtheorem{cor}[theorem]{Corollary}
\newtheorem{tont}[theorem]{Definition}
\newtheorem{lem}[theorem]{Lemma}
\newenvironment{Lemme}{\begin{lem} \em}{\end{lem}}
\newtheorem{Prop}[theorem]{Proposition}
\newcommand{\cO}{{\mathcal O}}
\def\sdep #1{#1^\dagger}
\begin{document}

\title [$A_f $ and relative conormal spaces]{The $A_f$  condition and relative conormal spaces for functions with non-vanishing derivative }

\author[T. Gaffney]{Terence Gaffney}\thanks{T.~Gaffney was partially supported by PVE-CNPq Proc. 401565/2014-9}
 \address{T. Gaffney, Department of Mathematics\\
  Northeastern University\\
  Boston, MA 02215}

\author[A. Rangachev]{Antoni Rangachev}\thanks{A.~Rangachev was partially supported by University of Chicago FACTTS grant ``Conormal and Arc Spaces in the Deformation Theory of Singularities''}
\address {A. Rangachev, Department of Mathematics\\
  University of Chicago\\
  Chicago, IL 60637\\
Institute of Mathematics and Informatics\\
Bulgarian Academy of Sciences\\
Akad. G. Bonchev, Sofia 1113, Bulgaria}

\begin{abstract}
We introduce a join construction as a way of completing the description of the relative conormal space of an analytic function on a complex analytic space that has a non-vanishing derivative at the origin. Then we show how to obtain a numerical criterion for Thom's $A_f$ condition.
\end{abstract}

\maketitle

\selectlanguage{english}

\section{Introduction}\label{intro} Throughout the paper when dealing with complex analytic germs we will work with suitably small representatives of these germs. Let $(X,0)$ be a reduced complex analytic germ in $(\mathbb{C}^m,0)$. Define the {\it conormal space} $C(X)$ of $X$ as the closure in $X \times \mathbb{P}^{m-1}$ of the set of pairs $(x,H)$ such that $x$ is a smooth point of $X$ and $H$ is a tangent hyperplane to $x$. Suppose $f$ is a nonconstant analytic function on $X$. Then the {\it relative conormal space} $C(X,f)$ is defined as the closure in $X \times \mathbb{P}^{m-1}$ of the set of pairs $(x,H)$ such that $x$ is a smooth point of the level set $f^{-1}(f(x))$ and $H$ is a tangent hyperplane to $f^{-1}(f(x))$ at $x$. 

In his Thm.\ 4.2 in \cite{Massey2}, Massey describes the fiber of the relative conormal space $C(X,f)$  over $0$ assuming $f \in m_{X,0}^2$, where $m_{X,0}$ is the maximal ideal of $\mathcal{O}_{X,0}$. Our Thm.\ \ref{conormal fiber} gives description of this fiber in the case $f \not \in m_{X,0}^2$. More precisely, let $\tilde{f}$ be an extension of $f$ to $\mathbb{C}^m$. In addition to the irreducible components of the fiber $C(X,f)$ over $0$ described by Massey we show that whenever $f \not \in m_{X,0}^2$ there are components which are the join in $\mathbb{P}^{m-1}$ of the point $d\tilde{f}(0)$ and the irreducible components of the fiber of $C(X)$ over $0$. 

We apply Thm.\ \ref{conormal fiber} to the numerical characterization of Thom's $A_f$ condition (see \cite{Thom}), which is a relative stratification condition for the study of functions and mappings on stratified sets. It plays an important role in Thom's second isotopy theorem, and provides a transversality condition in the development of the Milnor fibration. 

Assume $(Y,0)$ is a smooth subgerm of $(X,0)$ such that $f(Y)=0$. We say that the $A_f$ condition holds for the pair $(X-Y,Y)$ at $0$ if the fiber of $C(X,f)$ over $0$ lies in conormal space $C(Y)$ of $Y$. We say that $A_f$ holds along $Y$ if it holds at every point of $Y$, or if $C(X,f)|Y\subset C(Y)$. The $A_f$ condition is known to hold generically along $Y$ by a result of Hironaka \cite{Hironaka}. So it is important to understand the fiber of $C(X,f)$ over the origin and its relation to $C(X,f)|(Y-0)$.

Set $n:=m-k$. Choose an embedding of $(X,0)$ in $\mathbb{C}^{m}=\mathbb{C}^{n} \times \mathbb{C}^{k}$, so that $(Y,0)$ is represented by $0 \times V$, where $V$ is an open neighborhood of $0$ in $\mathbb{C}^k$. Let $\mathrm{pr}: \mathbb{C}^{n} \times \mathbb{C}^{k} \rightarrow \mathbb{C}^{k}$ be the projection. View $X$ as the total space of the family $\mathrm{pr}_{|X}: (X,0) \rightarrow (Y,0)$. For each closed point $y \in Y$ set $X_y: = X \cap \mathrm{pr}^{-1}(y)$.

Assume that $X$ and the fibers $X_y$ are equidimensional and $Y$ is the singular locus of $X$. Further, assume that $f(Y)=0$ and $f \notin m_{Y}^2$, where $m_{Y}^2$ is the ideal of $Y$ in $\mathcal{O}_{X,0}$. Then Thm.\ \ref{conormal fiber} and the lemma preceding it tell us that the irreducible components of $C(X,f)|Y$ are of two types: ``big components'' and ``small components'' which are the join of $d\tilde{f}(0)$ with the irreducible components of the fiber of $C(X)$ over the origin. By numerical control on the fibers $X_y,f_y$ we can ensure that the fiber of $C(X,f)$ over $0$ is of minimal dimension. Then it is easy to see that all ``big components'' are contained in $C(Y)$. The ``small components'' are hard to control, because their dimension may be small.

Denote by $C(X_0)'$ the closure of the set of pairs $(x,H)$ where $x$ is a smooth point in $X_0$ and $H$ is a tangent hyperplane of $X$ at $x$. To understand the nature of the ``small components'' one needs to understand the relation between 
$C(X)|X_0$ and $C(X_0)'$. In Thm.\ \ref{components} we give a dimensional condition which ensures that $C(X)|X_0$ and $C(X_0)'$ are the same up to embedded components. Finally, assuming that $X$ satisfies the infinitesimal Whitney A fiber condition along $Y$, which is a much weaker version of Whitney condition A, we show that the components of the fiber $C(X_0)'$ over the origin are contained in $C(Y)$. All this is the content of Thm.\ \ref{main}, which is the main result of the paper, and Thm.\ \ref{determinantal}.

\section{Relative Conormal Spaces}
We begin with  some constructions and notation. Let $(X,0)$ be a reduced complex analytic germ in $(\mathbb{C}^m,0)$ and let $U$ be an open set of  $\mathbb{C}^m$ containing a representative of $(X,0)$. Denote by  $T_{X}^{*}U$ the space obtained by taking the closure of the conormal vectors to the smooth part of $X$ in $\mathbb{C}^m\times {\mathbb{C}^m}^*$. As the fibers of $T_{X}^{*}U$ over points in $X$ are invariant under multiplication by elements from $\mathbb{C}^{*}$, we may projectivize $T_{X}^{*}U$ with respect to vertical homotheties of $T_{X}^{*}U$ and work with $\mathbb{P}(T_{X}^{*}U)$. This is precisely the conormal space $C(X)$ described in the introduction. 

Suppose $f$ is a function on $X$ and $\tilde{f}$ is an extension of $f$ to $\mathbb{C}^m$. The relative conormal space $C(X,f)$ of $X$ with respect to $f$ as defined in the introduction can be obtained as follows. Let $T_{f}^{*}U$ be the closure of all $(x,\eta)$ in $T_{X}^{*}U$ where $x$ is a smooth point in $X$ and $\eta(T_{x}X \cap \mathrm{ker}(  d\tilde{f})) = 0$. Then $C(X,f)$ is the projectivization of $T_{f}^{*}U$. Note that $C(X,f)$ does not depend on the choice of extension $\tilde{f}$ of $f$ (cf.\ Sct.\ 5 in \cite{G-K}).  Denote by $c \: C(X,f) \rightarrow X$ the structure morphism. For a point $x \in X$ denote by $C(X)_{x}$ and $C(X,f)_x$ the fibers of $C(X)$ and $C(X,f)$ over $x$ respectively. 

The  differential $d\tilde{f}$ of
$\tilde{f}$ defines an embedding of $X$ in  $\mathbb{C}^m\times {\mathbb{C}^m}^*$ by the graph map. Let $z_1, \ldots, z_m$ be coordinates on $U$ and $w_1, \ldots, w_n$ be the cotangent coordinates. Then the blowup of $T_{X}^{*}U$ along the image of the graph map is the blowup of $T_{X}^{*}U$ by the ideal $(w_1- \frac{\partial \tilde{f}}{\partial z_1}, \ldots, w_n - \frac{\partial \tilde{f}}{\partial z_m})$ in $T_{X}^{*}U$. We denote this blowup by $\mathrm{Bl}_{\mathrm{im} \ d\tilde{f}}T_{X}^{*}U$. Thus, the blowup is contained in $X \times {\mathbb{C}^m}^* \times \mathbb{P}^{m-1}$. Denote the exceptional divisor of this blowup by $E_{\tilde{f}}$. The projection of this exceptional divisor to $X$ is the {\it singular locus} of  $f$ on $X$ denoted by $S(f)$.   

Let $\pi : X \times {\mathbb{C}^m}^* \times \mathbb{P}^{m-1} \rightarrow X \times \mathbb{P}^{m-1}$ denote the projection. Then $\pi (E_{\tilde{f}})$ is independent of the extension of $\tilde{f}$ of $f$ by Cor.\ 2.12 in \cite{Massey2}. The following result describes the relation between  $C(X,f)$ and $\mathrm{Bl}_{\mathrm{im} \ d\tilde{f}}T_{X}^{*}U$. 
 
\begin{Lemme}\label{exceptional divisor} The following holds. 
\begin{itemize}
    \item [(\rm{i})] $E_{\tilde{f}} \cong \pi (E_{\tilde{f}}).$
    \item [(\rm{ii})] Suppose $S(f) \subset X_{\mathrm{sing}}$. Then $\pi (E_{\tilde{f}}) \subset c^{-1}(X_{\mathrm{sing}})$.  
\end{itemize}
\end{Lemme}
\begin{proof}
Part $\rm{(i)}$ is due to Massey (see the paragraph preceding Lemma 2.6 in \cite{Massey2}). If $(x,w,\eta)$ is a point in $E_{\tilde{f}}$, then $w=d\tilde{f}(x)$, so $\pi$ induces and isomorphism between $E_{\tilde{f}}$ and $\pi(E_{\tilde{f}})$. 

Consider $\rm{(ii)}$. By Lemma 2.6 in \cite{Massey2} (see also the proof of Thm.\ \ref{conormal fiber} below) it follows that $\pi (E_{\tilde{f}}) \subset C(X,f)$. But $E_{\tilde{f}}$ is supported over points $(x,w_{x})$ for which $w_x = d\tilde{f}(x)$ where $w_x$ is a conormal at $x$. But by hypothesis $w_x = d\tilde{f}(x)$ can happen only over singular points of $X$, which proves the claim. 
\end{proof}
We will also use the join operation, which we now describe. Given a point $a$ of projective space 
$\mathbb{P}^{m-1}$ and a subset $V$ of 
$\mathbb{P}^{m-1}$ 
distinct from the point, the {\it join} of $a$ and $V$ consists of 
the set of points
on all lines joining $a$ and the points of $V$. If $a$ is a point of $V$ and $V \neq a$, then the operation is still
well defined; one merely takes the join of $a$ and $V-a$,  and then takes the closure of this set.
It is easy to see that if $V$ is an analytic set and $a$ lies in $V$, then the join contains
the tangent cone to $V$ at $a$.  If $V$ is analytic, then so is the join, for we can view the join as the 
inverse image of the projection of $V-a$ to $\mathbb{P}^{m-2}$ from the point $a$. 
We denote the join of $a$ and $V$ by $a*V$. 

Let $C,x$ be a curve on $X,x$. Let $D_i$, $i=1,2$  be two lifts of $C$ to $X \times {\mathbb P}^{m-1}$, Denote by $D_{i,p}$ the fiber of $D_i$ over $p\in C$. Suppose $D_{1,p}\ne D_{2,p}$ for $p$ near $x$. Denote by $(D_1*D_2)_C$ the family of lines parameterized by $C$ whose fiber over $p$ is $D_{1,p}*D_{2,p}$.

Let $x$ be a point in $X$. Suppose $f\notin {m^2}_{X,x}$. Denote by $\langle d\tilde{f}(x) \rangle$ the  point of $\mathbb{P}^{m-1}$ determined by $d\tilde{f}(x)$. Denote the join of $\langle d\tilde{f}(x) \rangle$ and a subset $V$
of $\mathbb{P}^{m-1}$  by $d\tilde{f}(x)*V$ as well. It is easy to check that $d\tilde{f}(x)*C(X)_x$ is 
independent of the choice of extension of $f$ to the ambient space. If $f\in {m^2}_{X,x}$, then by convention
$d\tilde{f}(x)*C(X)_x$ is empty.

In the next theorem we will be working with limits along curves, so we discuss this a little. Given $G\in \mathcal{O}_{\mathbb{C}}^p$, $G(t)\ne 0$, for $t\ne 0$, the {\it limit direction of $G$ at $t=0$} is $\mathop {\lim }\limits_{t\to 0} \langle G(t) \rangle$, which is a point of $\mathbb{P}^{m-1}$. 

We can find the projective limit of $G$ by working directly with $G$ as follows. For any $g \in \mathcal{O}_{\mathbb{C}}$ denote by $o(g(t))$ the order of $t$ in $g(t)$. If $G(t) \in  \mathcal{O}_{\mathbb{C}}^p$, then $o(G(t))$ is the minimum of the orders of the component functions $g_i$ of $G(t)$. If $o(G(t))=k$, then the $p$-tuple whose entries are the coefficients of the degree $k$ terms of the  $g_i$ is the {\it leading term of $G$}. If we denote the leading term of $G$ by $L(G)$ then $\langle L(G) \rangle$ is the limit direction of $G$ at $t=0$. We can compute $L(G)$ as 
$$\mathop {\lim }\limits_{t\to 0}  {1\over{t^k}}(G(t)),$$
where $k$ is the order of $G$.

The next proposition is the key to the description here of the relative conormal space. It grew 
out of an attempt to improve on some work of Massey (cf.\ Thm.\ 3.11 in \cite{Massey1}). In particular, 
the idea of using the blow-up
of the graph of the differential of $\tilde{f}$ to study the relative conormal space is an idea we learned from him.

\begin{theorem} \label {conormal fiber} Suppose $(X,0)$ is the germ of a reduced complex analytic set and $f\: X\to \mathbb{C}$ is a submersion
on a Zariski open and dense subset of $X$. Then for each point $x \in X$ we have the set-theoretic equality  $$C(X,f)_x=(\pi(E_f))_x\cup d\tilde{f}(x)*C(X)_x$$\end{theorem}
\begin{proof} The case $f \in m_{X,x}^2$ is part of Thm.\ 4.2 in \cite{Massey2}. 

We first show that $C(X,f)_x$ contains $ d\tilde{f}(x)*C(X)_x$. Suppose $d\tilde{f}(x) \neq 0$.
Suppose $H\in C(X)_x$ and  $H\ne \langle d\tilde{f}(x) \rangle$.  
There exists a curve
$$\noindent \phi=(\phi_1,\phi_2)\:(\mathbb{C},0)\to (X\times \mathbb{P}^{m-1},x\times H),$$ such that
the hyperplane $\phi_2(t)$ is tangent
to $X$ at $\phi_1(t)$, and $\phi_1(t)\in X-X_{\mathrm{sing}}-S(f)$ for  $t\ne0$, where $S(f)$ is the critical locus of $f$.  Denote the image of $\phi_1$ by $C$.
For $t$ sufficiently small, $t\ne 0$, 
we can assume that the augmented Jacobian module, which is the module generated over $\mathcal{O}_{X,x}$ by the columns of the Jacobian matrix of $(G,\tilde{f})$, has maximal rank because $\phi_1(t)\in X-X_{\mathrm{sing}}-S(f)$ for $t\ne0$. Then $\langle d\tilde{f}(\phi_1(t)) \rangle$ and $\phi_2(t)$ give two lifts   of $C$ to ${\mathbb P}^{m-1}$ and since the rank of the augmented Jacobian module is maximal along $C$, then  $(\langle d\tilde{f} \rangle*\phi_2)_C$ is well-defined. Since both $C(X,f)$ and  $(\langle df \rangle*\phi_2)_C$ are Zariski closed and a Zariski open subset of the second lies in the first, then the second lies in the first as well. This implies that $d\tilde{f}(x)*H$ is in $C(X,f)_{x}$.

The rest of the proof is related to the fibers of $C(X,f)$ or $\mathrm{Bl}_{\mathrm{im} \ d\tilde{f}}T_{X}^{*}U$,  so it is convenient to work along curves and take limits. Giving a curve on $C(X,f)$ at smooth points of $f$ on $X$ amounts to giving smoothly varying linear combinations of the rows of the Jacobian matrix of $F=(G,\tilde{f})$ where $G: (\mathbb{C}^{m},0) \rightarrow (\mathbb{C}^{p},0)$ and $X = G^{-1}(0)$ and projectivising. We can make this precise as follows: if  $(x,H)\in C(X,f)_x$ then there exist curves $\phi$, $\psi$ such that $\phi\:(\mathbb{C},0)\to (X,x)$, and 
$$\psi=(\psi_1,\psi_2)\:(\mathbb{C},0)\to \mathbb{C}^p\times\mathbb{C}.$$ Then $H$ is the projective limit of the curve 
\begin{equation}\label{limit}
 {\psi_1 (t)\cdot  DG(\phi (t))-\psi_2(t) d\tilde{f}(\phi (t))}
\end{equation}
where $\psi_2(t)$ is taken with a minus sign for convenience of comparison with the blow-up construction. 

We can do something similar for $l\in \pi(E_{\tilde{f}})_x$. Namely, we can use $\phi$ and $\psi$ as before with $\psi_2=1$. Then $(x,l)$ is a point of $\pi(E_{\tilde{f}})_x$ if and only we can find curves 
$\phi$, $\psi$ such that $\phi(0)=x$ and  $l$ is the projective limit of the curve 


$$\psi_1 (t)\cdot  DG(\phi (t))-d\tilde{f}(\phi (t)).$$

This description shows that  $\pi(E_{\tilde{f}})_x$ is also contained in $C(X,f)_x$. 

Now suppose $(x,H)\in C(X,f)_x$. Then there exist curves $\phi$, $\psi$ such that $H$ is the projective limit of a curve of type (\ref{limit}). 

We deal separately with the cases where $f\in {m^2}_{X,x}$ and $f\notin {m^2}_{X,x}$.

Assume $f\in {m^2}_{X,x}$ and that $\tilde{f}$ is chosen in such a way so that  $d\tilde{f}(x)=0$.

If
$$o(\psi_2(t))<o(\psi_1(t)\cdot DG(\phi (t))),$$ then 
${{\psi_1(t)}\over{\psi_2}(t)}
\cdot  DG(\phi (t))$
gives a lift of $\phi$ to $T_X^*U$ and the projective limit of 
${{\psi_1(t)}\over{\psi_2(t)}}
\cdot  DG(\phi (t))- d\tilde{f}(\phi (t))$ is $H$, showing that $H$ lies in $\pi(E_{\tilde{f}})_x$. 

We have $o(d(\tilde{f}(\phi(t)) \geq 1$, because $f \in m_{X,x}^2$. Suppose 
$$o(\psi_2(t))\ge o(\psi_1(t)\cdot DG(\phi (t))).$$ 
Then
$$o(\psi_2(t)d\tilde{f}(\phi(t)))>o(\psi_1(t)\cdot DG(\phi (t))),$$
so $H$ is the projective limit of $ \psi_1(t)\cdot DG(\phi (t)) $. Hence $H$ is a limiting tangent hyperplane to $X$.  

There are two subcases, depending on the order $d\tilde{f}(\phi)$.

If the order of the components of $d\tilde{f}(\phi)$ is greater than $1$, 
then
we use the same $\phi$ but replace $\psi_1(t)$ by $\psi_1(t)/t^k$ where $k$ is chosen so that
the order of $(\psi_1(t)/t^k)\cdot DG(\phi(t))$ is greater than $0$, but less 
than the order of $d\tilde{f}(\phi (t))$. We may take $k=$max$\{o(\psi_1(t)\cdot DG(\phi (t))-o(d\tilde{f}(\phi (t)))-1, 0\}$. Then again 
$(\psi_1(t)/t^k)\cdot DG(\phi(t))$ provides a lift of $\phi$ to $T_X^*U$, and
the projective limit of $  (\psi_1(t)/t^k)\cdot DG(\phi(t))-d\tilde{f}(\phi(t))$ is again $H$.  If the order of 
the components of $df(\phi(t))$ is  $1$, then we re-parameterize $\phi(t)$ so that the order of
 $d\tilde{f}(\phi(t))$ is again greater than $1$ and repeat the argument. This finishes the first case.

Now suppose $f\notin {m^2}_{X,x}$. If 
$o(\psi_2(t)) < o(\psi_1(t)\cdot DG(\phi(t)))$, then $H=\langle d\tilde{f}(x) \rangle$. If the order
relation is reversed, then $H$ is in $C(X)_x$.  In either case $H\in d\tilde{f}(x)*C(X)_x$,
unless $C(X)_x= \langle d\tilde{f}(x) \rangle$.

If $o(\psi_2(t))=o(\psi_1(t)\cdot DG(\phi (t)))$, then $H\in d\tilde{f}(x)*C(X)_x$ unless
$$\mathop {\lim }\limits_{t\to 0}{{\psi_1(t)}\over{\psi_2(t)}}
\cdot  DG(\phi (t))=d\tilde{f}(x).$$  In this case we can again get a lift of  $\phi$ to $T_X^*U$,
using ${{\psi_1(t)}\over{\psi_2(t)}}
\cdot  DG(\phi (t))$, so again $H$ is in $\pi(E_{\tilde{f}})_x$.

It remains to deal with the case where $C(X)_x= \langle d\tilde{f}(x) \rangle$. We need to show that 
$ \langle d\tilde{f}(x) \rangle$ lies in $\pi(E_{\tilde{f}})_x$.  
Since the dimension of $C(X)_x$ is zero, $X$ must be a hypersurface, and by a change of coordinates we may assume $f$ is a linear form.  There exists $\phi$ such that the projective limits of $DG(\phi(t))$ and of $tDG(\phi(t))$ are both $\langle
d\tilde{f}(x)\rangle$.  Let $k=o(DG(\phi(t)))$. Then

$$\mathop {\lim }\limits_{t\to 0} \langle {{(1+t)}\over {t^k}}DG(\phi(t))-
d\tilde{f}(\phi(t)) \rangle= \langle d\tilde{f}(x) \rangle$$ which completes the proof.\end{proof}

In checking the $A_f$ condition at the origin in a family, we need to show that $C(X,f)_{0}$ consists of hyperplanes which contain the tangent plane to $Y$ at the origin. The previous theorem shows that the components of $C(X,f)_{0}$ are of two types--blowup components and join components. Blowup  components have large dimension, and can be detected and controlled numerically. However,  $C(X)_0$ may contribute small components of join type when $f \not \in m_{X,0}^2$. In the next section we prove a theorem which shows that these join components can be controlled using the fiber $X_0$. 

\section{Fibers of generalized conormal spaces}
Let $h:(X,0) \rightarrow (Y,0)$ be a complex analytic family such that $X$ is equidimensional and for each closed point $y \in Y$ the fibers $X_y$ are equidimensional of positive dimension $d$. Suppose $Y$ is irreducible and Cohen--Macaulay. 

The purpose of this section is to understand the relation between the closed subscheme of the conormal $C(X)$ which set-theoretically consists of limits of hyperplanes through points of $X_0$, and the fiber $C(X)_0$ over $0 \in Y$ of the conormal $C(X)$. 
Our treatment is more general. The conormal $C(X)$ is the $\mathrm{Projan}$ of the Rees algebra of the Jacobian module of $X$ (cf.\ Sct.\ 1.5 in \cite{KT}). Instead of working with conormal spaces, we work below with $\mathrm{Projan}$s of Rees algebras of modules.

Let $\mathcal{F}:=\mathcal{O}_{X}^p$ be a free module of rank $p \geq 1$. Let $\mathcal{M}$ be a coherent submodule which is free of rank $e$ off a closed subset $S$ of $X$. Further, assume $S$ is finite over $Y$. Set $r:=d+e-1$. 

Form the symmetric algebra $\mathrm{Sym}(\mathcal{F})$ of $\mathcal{F}$ and the Rees algebra $\mathcal{R}(\mathcal{M})$ of $\mathcal{M}$ which is the subalgebra of $\mathrm{Sym}(\mathcal{F})$ generated by $\mathcal{M}$ placed in degree $1$. Denote the $k$th graded components of these algebras by $\mathcal{F}^{k}$ and $\mathcal{M}^{k}$ respectively. Given a closed point $y \in Y$ denote by $\mathcal{M}^{k}(y)$ the image of $\mathcal{M}^{k}$ in the free $\mathcal{O}_{X_{y}}$-module $\mathcal{F}^{k}(y)$.

Set $C:=\mathrm{Projan}(\mathcal{R}(\mathcal{M}))$. Denote by $c \colon C \rightarrow X$ be the structure morphism. Let $y$ be a closed point in $Y$. Set $C(y):=\mathrm{Projan}(\mathcal{R}(\mathcal{M}(y)))$ and denote by $C_y$ the fiber of $h \circ c$ over $y \in Y$. For an irreducible component $V$ of $C_y$ we say it is {\it horizontal} if it surjects onto an irreducible component of $X_y$ or we say it is {\it vertical} otherwise. 

\begin{theorem}\label{components}
Suppose $\dim c^{-1}0 <r$. Then there exists a Zariski open neighborhood $U$ of \ $0$ in $Y$ such that for each 
$y \in U$ the irreducible components of $C_y$ are horizontal. Furthermore, if $\mathcal{M}$ is a direct summand of $\mathcal{F}$ locally off $S$, then we have an equality of fundamental cycles
\begin{equation}\label{fund. cycl.}
[C_y]=[C(y)]. 
\end{equation}
\end{theorem}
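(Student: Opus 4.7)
The plan rests on two ingredients: a dimension count for $C$ and a lower bound on the dimensions of components of the fibres $C_y$. For the first, since $\mathcal{M}$ is locally free of rank $e$ off $S$, the morphism $c$ is a $\mathbb{P}^{e-1}$-bundle over $X\setminus S$, so its image has dimension $\dim X + e - 1 = \dim Y + r$; the hypothesis $\dim c^{-1}(0) < r$, combined with upper semicontinuity of fibre dimension for the proper morphism $c$, lets us shrink $X$ around $0$ so that $\dim c^{-1}(x) < r$ throughout this neighbourhood. Since $\dim S \le \dim Y$ by finiteness of $S\to Y$, this forces $\dim c^{-1}(S) < \dim Y + r$, so $C$ is equidimensional of dimension $\dim Y + r$. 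For the lower bound, Cohen--Macaulayness of $Y$ provides a regular system of parameters at $y$ whose pullback to $C$ cuts out $C_y$ by $\dim Y$ equations, so by Krull's height theorem every component $V$ of $C_y$ satisfies $\dim V \ge \dim C - \dim Y = r$.

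Next I classify components of $C_y$. A horizontal component surjects onto a component of $X_y$, is generically a $\mathbb{P}^{e-1}$-bundle, and thus has dimension exactly $d + e - 1 = r$. A vertical component $V$ does not surject onto any component of $X_y$; since $c$ is a $\mathbb{P}^{e-1}$-bundle off $S$, $V$ must lie in $c^{-1}(X_y\cap S)$. As $X_y\cap S$ is finite by finiteness of $S\to Y$, irreducibility of $V$ forces $V\subseteq c^{-1}(x_0)$ for some $x_0\in X_y\cap S$, giving $\dim V \le \dim c^{-1}(x_0)$. In the germ setting we may assume $S\cap X_0 = \{0\}$, and finiteness of $S\to Y$ then guarantees that for $y$ in a sufficiently small neighbourhood $U$ of $0\in Y$, every point of $S\cap X_y$ lies in the region where $\dim c^{-1}(x) < r$. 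This contradicts the lower bound $\dim V \ge r$, ruling out vertical components.

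For the cycle equality, under the direct-summand hypothesis the canonical surjection $\mathcal{R}(\mathcal{M})\otimes_{\mathcal{O}_Y} \mathcal{O}_{Y,y}/\mathfrak{m}_y \twoheadrightarrow \mathcal{R}(\mathcal{M}(y))$ is an isomorphism over $X_y\setminus S$, so the induced closed embedding $C(y)\hookrightarrow C_y$ is an isomorphism off the finite set $c^{-1}(X_y\cap S)$. By the first part $C_y$ has no component supported on this set, so $C(y)$ and $C_y$ share the same irreducible components with the same multiplicities, each being determined at a generic point lying off $S$. This gives $[C_y] = [C(y)]$.

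The main obstacle is the second paragraph: the hypothesis controls only the fibre of $c$ at the single point $0\in X$, yet a vertical component of $C_y$ can a priori be supported at any point of $S\cap X_y$. Bridging this gap requires a careful synchronisation of the upper semicontinuity of $\dim c^{-1}(x)$ in $x$, the finiteness of $S\to Y$ to move the control from $0$ to nearby points of $S\cap X_y$, and the Cohen--Macaulay lower bound from the first paragraph; once these three facts are lined up, the desired dimension contradiction falls out.
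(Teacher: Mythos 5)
Your argument follows essentially the same route as the paper: equidimensionality of $C$ of dimension $\dim Y+r$ via the generic $\mathbb{P}^{e-1}$-bundle structure off $S$, the lower bound $\dim V\ge r$ from Krull's height theorem over the Cohen--Macaulay base, horizontality by spreading the fibre-dimension bound from $0$ to a Zariski neighbourhood and forcing vertical components into fibres over points of $S_y$, and the cycle equality from the fact that $C(y)\hookrightarrow C_y$ is an isomorphism off $c^{-1}(S_y)$ while no component of $C_y$ is supported there. Two small slips worth fixing, neither of which affects the argument: Cohen--Macaulayness of $Y$ supplies a system of parameters (forming a regular sequence), not a \emph{regular} system of parameters; and $c^{-1}(X_y\cap S)$ is not a finite set but merely lies over one, which is all you need since horizontal components cannot be contained in it.
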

\begin{proof}
Because $h$ has equidimensional fibers of dimension $d$ and $X$ is equidimensional, then $\dim X = d+\dim Y$. 
Also, by assumption $d>0$ and $S$ is finite over $Y$. Thus $S$ is nowhere dense in $X$. Let $x$ be a point in $X$ with $x \not \in S$. Because the formation of Rees algebra commutes with flat base change we have $\mathcal{R}(\mathcal{M})_x = \mathcal{R}(\mathcal{M}_x)$. But $\mathcal{M}_x$ is free of rank $e$ becase $x \not \in S$. Thus $\mathcal{R}(\mathcal{M}_x) = \mathrm{Sym}(\mathcal{O}_{X,x}^{e})$ whence $\dim c^{-1}x =e-1$. The dimension formula applied for each irreducible component of $C$  yields that $C$ is equidimensional and $\dim C = r+ \dim Y$. 

Let  $C_{y}'$ be an irreducible component of $C_y$. Because $Y$ is Cohen--Macaulay locally at each closed point $y \in Y$, then by Krull's height theorem $C_{y}'$ is of codimension at most $\dim Y$. Because $C$ is of finite type over the complex numbers we get
\begin{equation}\label{dim. ineq.}
\dim C_{y}' \geq r.
\end{equation}

Replace $X$ with Zariski neighborhood of $0$ so that $\dim c^{-1} x <r$ for each point $x \in X$. Let $U$ be a Zariski open subset of the image of $h$ that contains $0 \in Y$. Let $y$ be a point in $U$. Suppose that $c$ maps $C_{y}'$ to a point $\zeta \in X_y$. Then  $C_{y}' \subset c^{-1}\zeta$. But  $\dim c^{-1} \zeta <r$ which contradicts with (\ref{dim. ineq.}). But $S_y$ is zero-dimensional. Thus there exists a Zariski open dense subset $Z_y$ of $C_{y}'$ whose image under $c$ misses $S_y$. 

Let $\zeta \in c(Z_y)$. Then $\mathcal{M}_{\zeta}$ is free of rank $e$. Thus $\dim c^{-1}\zeta = e-1$. By the dimension formula 
$$\dim C_{y}'=\dim c(C_{y}')+e-1.$$ But $\dim C_{y}' \geq r$. So $\dim c(C_{y}') \geq d$. Because $c(C_{y}') \subset X_y$ and $\dim X_y = d$, then $c(C_{y}')$ is an irreducible component of $X_y$ which proves the first claim of the theorem. 

Next, assume $\mathcal{M}$ is locally a direct summand of $\mathcal{F}$ off $S$. Set $X=\mathrm{Specan}(R)$ and $Y=\mathrm{Specan}(Q)$. Then morphism $h$ induces a ring homomorphism $h^{\#}\colon Q \rightarrow R$. Denote by $\mathfrak{n}_y$ the image under $h^{\#}$ of the ideal of $y$ in $Q$. Consider the homomorphism
$$\phi_{y} : \mathcal{R}(\mathcal{M})/\mathfrak{n}_y\mathcal{R}(\mathcal{M}) \rightarrow \mathrm{Sym}(\mathcal{F}(y))$$
Denote its kernel by $I_{\phi_{y}}$. Observe that $$(\mathcal{R}(\mathcal{M})/\mathfrak{n}_y\mathcal{R}(\mathcal{M}))/I_{\phi_{y}}= \mathcal{R}(\mathcal{M}(y)).$$
Let's identify $I_{\phi_{y}}$. Consider the homomorphism $$\tilde{\phi}_{y}: \mathcal{R}(\mathcal{M}) \rightarrow \mathrm{Sym}(\mathcal{F}(y)).$$
We have $\mathrm{Ker}(\tilde{\phi}_{y}) =\mathfrak{n}_y\mathrm{Sym}(\mathcal{F}) \cap \mathcal{R}(\mathcal{M})$. By definition $I_{\phi_{y}}$ is the kernel of $\phi_{y}$. As $\tilde{\phi}_{y}$ factors through $\phi_{y}$ we get
\begin{equation}\label{vertical eq}
I_{\phi_{y}} = (\mathfrak{n}_{y} \mathrm{Sym}(\mathcal{F}) \cap \mathcal{R}(\mathcal{M}))/\mathfrak{n}_y \mathcal{R}(\mathcal{M}).
\end{equation}
Because the source of $\phi_{y}$ is supported on $X_y$, then $\mathrm{Supp}(I_{\phi_{y}}) \subset X_y$. Let $x \in X_y$
with $x \not \in S_y$. Then $\mathcal{M}$ is locally a direct summand of $\mathcal{F}$ at $x$. Write $\mathcal{F}_{x} = \mathcal{M}_{x} \oplus L(x)$. The formation of symmetric and Rees algebras commutes with localization, hence $$(\mathfrak{n}_y\mathrm{Sym}(\mathcal{F}))_{x} = \mathfrak{n}_{y}\mathrm{Sym}(\mathcal{F}_{x}) \ \text{and} \ \mathcal{R}(\mathcal{M})_{x}=\mathcal{R}(\mathcal{M})_{x}.$$
On the other hand, $$\mathfrak{n}_y\mathrm{Sym}(\mathcal{F}_{x}) = \mathfrak{n}_{y}\mathrm{Sym}(\mathcal{M}_{x}) \otimes\mathfrak{n}_{y}\mathrm{Sym}(L(x)).$$
Because $\mathcal{M}_{x}$ is free we have $\mathrm{Sym}(\mathcal{M}_{x})=\mathcal{R}(\mathcal{M}_{x})$.
Hence, locally at $x$ the ideals $\mathfrak{n}_{y}\mathcal{R}(\mathcal{M})$ and $\mathfrak{n}_y\mathrm{Sym}(\mathcal{F}) \cap \mathcal{R}(\mathcal{M})$ agree. Finally, we obtain that if $I_{\phi_{y}}$ is nonzero, then it is supported at points from $S_{y}$ only.
In particular, $I_{\phi_{y}}$ vanishes locally at the minimal primes of $\mathcal{R}(\mathcal{M})/\mathfrak{n}_{y}\mathcal{R}(\mathcal{M})$ because as we showed above each of these minimal primes contracts to a minimal prime of $X_y$. Therefore, $C(y)$ and $C_y$ differ by vertical embedded components supported over $S_y$. This proves (\ref{fund. cycl.}).
\end{proof}
\begin{remark}\label{support2} \rm Note that in general without assuming the bound on the dimension of $c^{-1}0$, the proof above shows that $C(y)=(C_y-W)^{-}$ for any $y \in Y$ where $W$ is the union of irreducible components of $C_y$ surjecting on $S_y$.

A more general version of the theorem above without assuming that $S$ is finite over $Y$ can be derived using Bertini's theorem for extreme morphisms from \cite{Rangachev}. The direct summand assumption can be relaxed at the expense of mild hypothesis on $X$ as remarked at the end of Sct.\ $2$ in \cite{Rangachev}. 

\end{remark}

\section{The $A_f$ condition and the main result}
Let $(X,0)$ be a reduced complex analytic set germ with $X=G^{-1}(0)$ where $G\: (\mathbb{C}^{n+k},0) \to (\mathbb{C}^p,0)$. Assume $(Y,0) \subset (X,0)$ is smooth subgerm of dimension $k$. Choose an embedding of $(X,0)$ in $\mathbb{C}^{n+k}=\mathbb{C}^{n} \times \mathbb{C}^{k}$, so that $(Y,0)$ is represented by $0 \times V$, where $V$ is an open neighborhood of $0$ in $\mathbb{C}^{k}$. Let $\mathrm{pr}: \mathbb{C}^{n} \times \mathbb{C}^{k} \rightarrow \mathbb{C}^{k}$ be the projection, $i\:\mathbb{C}^{k}\rightarrow \mathbb{C}^{n} \times \mathbb{C}^{k}$ be the inclusion $i(y)=(0,y)$ and $\pi_Y$ be the retraction $i\circ \mathrm{pr}_{|X}$. View $X$ as the total space of the family $\pi_Y: (X,0) \rightarrow (Y,0)$. For each closed point  $y \in Y$ set $X_y: = X \cap \pi^{-1}_Y(y)$.

The {\it Jacobian module} $JM(X)$ of $X$ is the submodule of ${\mathcal O}^p_{X}$ generated by the partial derivatives of $G$. It is a direct summand of ${\mathcal O}^p_{X}$ locally off the singular locus of $X$. Denote the smooth part of $X$ by $X_{\mathrm{sm}}$. Suppose $f$ is an analytic function on $X$ that is a submersion on $X_{\mathrm{sm}}$. Then the singular locus $S(f)$ of $f$ is contained in $X_{\mathrm{sing}}$. Denote by $\tilde{f}$ an extension of $f$ to the ambient space. Define $H=(G,\tilde{f})$, and let $JM(H)$ denote the ${\mathcal O}_{X,0}$-module defined by the partial derivatives of $H$. Note that $JM(H)$ is independent of the the choice of extension of $f$ by the discussion in the beginning of Sct.\ $5$ in \cite{G-K}. Finally, denote by $c$ the structure morphism $c \: C(X,f) \rightarrow X$ and by $C(Y)$ the conormal space of $Y$ in $\mathbb{C}^{n+k}$. 

We say that $A_f$ condition holds for the pair $X_{\mathrm{sm}},Y$ at $0$ if  $f(Y)=0$ and $Y$ lies in every hyperplane obtained as a limit of tangent hyperplanes to a level hypersurface at a point $x \in X_{\mathrm{sm}}$ as $x$ approaches $0$.

We review briefly the connection between the theory of integral closure of modules and Thom's $A_f$ condition. Recall that given a submodule $\mathcal{M}$ of a free ${\mathcal O}_{X,0}$ module $\mathcal{F}$, we say that $u \in \mathcal{F}$ is {\it strictly dependent} on $\mathcal{M}$ and we write $ u \in \mathcal{\sdep M}$, if for all analytic path germs $\phi:(\mathbb C,0)\to(X,0)$, $\phi^*u$ is contained in $\phi^*(\mathcal{M})m_1$, where $m_1$ is
the maximal ideal of $\mathcal{O}_{\mathbb{C},0}.$

\begin{Prop}\label{A_f} Assume $f(Y)=0$. Then the following are equivalent
\begin{itemize}
\item[i)]The $A_f$ condition holds for the pair $X_{\mathrm{sm}},Y$ at $0$.
\item[ii)] $c^{-1}(Y)\subset C(Y)$.
\item[iii)] $\pd H{y_j}\in\sdep{JM(H)}$ for all $j=1, \ldots, k.$
\end{itemize}
\end{Prop}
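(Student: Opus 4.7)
My plan is to prove (i) $\Leftrightarrow$ (ii) directly from the definitions, and then (ii) $\Leftrightarrow$ (iii) by combining the curve characterization of $C(X,f)$ from Proposition~\ref{conormal fiber} with the curve definition of strict dependence. For the first equivalence, since $Y$ is smooth of dimension $k$ the fibers of $C(Y)$ are $C(Y)_y=\{H\colon H\supset T_yY\}$, so the $A_f$ condition at $y\in Y$ (every limit of tangent hyperplanes to fibers of $f$ at $y$ contains $T_yY$) is by definition the inclusion $C(X,f)_y\subset C(Y)_y$. Because the failure locus of $A_f$ is closed and $A_f$ is generic, the condition at $0$ is equivalent to its holding over a Zariski neighborhood of $0$ in $Y$, which is $C(X,f)\cap c^{-1}(Y)\subset C(Y)$.

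For (ii)$\Leftrightarrow$(iii), observe that by Proposition~\ref{conormal fiber} any $(y,\xi_\infty)\in C(X,f)\cap c^{-1}(Y)$ arises as $\xi_\infty=\lim_{t\to 0}\langle\psi(t)\cdot DH(\phi(t))/t^{r}\rangle$ for curves $\phi\colon(\mathbb{C},0)\to(X,y)$ and $\psi\colon(\mathbb{C},0)\to\mathbb{C}^{p+1}$, with $r$ the $t$-order of $\psi\cdot DH(\phi)$. Using that $\pd H{y_l}(x)=DH(x)\cdot e_l^{y}$, the $dy_l$-component of $\xi_\infty$ equals $\lim_{t\to 0}\psi(t)\cdot\pd H{y_l}(\phi(t))/t^{r}$, so $\xi_\infty\supset T_yY$ iff $\psi(t)\cdot\pd H{y_l}(\phi(t))=o(t^{r})$ for every $l$. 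The direction (iii)$\Rightarrow$(ii) is then immediate: strict dependence yields $\pd H{y_l}(\phi(t))=t\cdot DH(\phi(t))\cdot a(t)$ for some $a\in\mathbb{C}\{t\}^{k+n}$, so $\psi(t)\cdot\pd H{y_l}(\phi(t))=t\cdot(\psi\cdot DH(\phi))\cdot a=O(t^{r+1})=o(t^{r})$.

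For (ii)$\Rightarrow$(iii) I argue contrapositively. If some $\pd H{y_l}\notin\sdep{JM(H)}$ then there is a curve $\phi$ for which the class of $\pd H{y_l}(\phi(t))$ in the finite-dimensional $\mathbb{C}$-vector space $\phi^{*}JM(H)/t\phi^{*}JM(H)$ is nonzero. I would pick a linear functional $\lambda$ detecting this class and, using Smith normal form for $\phi^{*}JM(H)\subset\mathbb{C}\{t\}^{p+1}$ as a free $\mathbb{C}\{t\}$-module, lift $\lambda$ to a row vector $\psi\in\mathbb{C}\{t\}^{p+1}$ chosen so that $\psi\cdot\pd H{y_l}(\phi(t))$ has the same minimal $t$-order as the covector $\psi\cdot DH(\phi(t))$. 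Then $\lim\langle\psi\cdot DH(\phi)/t^{r}\rangle$ lies in $C(X,f)_0$ but has nonzero $dy_l$-component, contradicting (ii).

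The main obstacle is precisely this last construction: one must control the order of the lift $\psi$ to avoid introducing "denominators" (elementary divisors $t^{a_i}$ with $a_i>0$) that would destroy the required equality of $t$-orders. A cleaner, more conceptual alternative is to invoke the $\Projan$-theoretic characterization of strict dependence—that $\pd H{y_l}\in\sdep{JM(H)}$ at $0$ iff the section of $\mathcal{O}(1)$ on $\Projan(\mathcal{R}(JM(H)))$ determined by $\pd H{y_l}$ vanishes on the fiber over $0$—and then transfer the data across the row/column duality relating this "column-side" $\Projan$ to the space $C(X,f)$.
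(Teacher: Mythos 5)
Your argument splits into two halves, and they fare differently. The forward half is fine: the computation that the $dy_l$-entry of $\psi(t)\cdot DH(\phi(t))$ is $\psi(t)\cdot\pd H{y_l}(\phi(t))$, together with strict dependence forcing that entry to have order strictly larger than the order $r$ of the whole covector, correctly shows that every limit along a curve contains $T_0Y$; routing through (i) and Hironaka's genericity then gives (ii). (Note the paper itself does none of this: it declares (i)$\Leftrightarrow$(ii) obvious and cites Lemma 5.1 of \cite{G-K} for (i)$\Leftrightarrow$(iii), so your write-up is a genuinely more self-contained route.) Also, the step you flag as the ``main obstacle'' is in fact not one: writing $\phi^*JM(H)\subset\mathbb{C}\{t\}^{p+1}$ in an adapted basis $e_1,\dots,e_{p+1}$ with $t^{a_1}e_1,\dots,t^{a_s}e_s$ generating it, the hypothesis $\phi^*\pd H{y_l}\notin t\,\phi^*JM(H)$ gives an index $i_0$ whose coefficient is a unit, and $\psi=e_{i_0}^*$ achieves $\mathrm{ord}\bigl(\psi\cdot\phi^*\pd H{y_l}\bigr)=a_{i_0}=\min_v\mathrm{ord}(\psi\cdot v)$ over $v\in\phi^*JM(H)$. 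So the lift exists with the required equality of orders.

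The genuine gap is elsewhere, in the contrapositive direction. The curve $\phi$ witnessing the failure of strict dependence is handed to you by the definition of $\sdep{JM(H)}$, and nothing forces it to meet the locus where $X$ is smooth and $f$ is a submersion. If $\phi(t)$ lies entirely in $\Sigma(X)$ (or in the critical locus of $f$ on $X_{\mathrm{sm}}$) for all $t$, then $\langle\psi(t)\cdot DH(\phi(t))\rangle$ is \emph{not} known to be a point of $C(X,f)_{\phi(t)}$ for $t\ne 0$ --- the relative conormal is the closure of the covectors over the nonsingular locus only, and over a singular curve its fibers need not coincide with the projectivized row space of $DH$. Hence your limit covector with nonzero $dy_l$-component is not shown to lie in $C(X,f)_0$, and no contradiction with (i) or (ii) is obtained. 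To close this you need the nontrivial fact that strict dependence can be tested on curves not contained in any proper analytic subset of $X$ (e.g.\ by reducing strict dependence to integral dependence over $X\times\mathbb{C}$ and using that $\Projan$ of the Rees algebra is the closure of its restriction over the generic locus, so failure propagates to a generic curve). That reduction is essentially the content of the Gaffney--Kleiman lemma the paper cites; your closing suggestion to use the $\Projan$-theoretic characterization of strict dependence is the right instinct, but as stated it is a pointer to the missing argument rather than the argument itself.
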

\begin{proof}  The equivalence of i) and ii) is obvious; the equivalence of i) and iii) is Lemma 5.1 of \cite{G-K}.
\end{proof}
A similar result holds for the Whitney A condition. The condition we need for our main result is a much weaker version of Whitney A.
\begin {tont} We say that $(X,0) \rightarrow (Y,0)$ satisfies the infinitesimal Whitney A fiber condition at $0$ if $\pd G{y_j}\in\sdep{JM(X_0)}$ for all $j=1, \ldots, k.$\end{tont}

This condition is equivalent to asking that limiting tangent hyperplanes to $X$ along curves on  $X_0$ contain the tangent space to $Y$ (cf.\ Lemma 4.1 in \cite{G-K}). So it is much weaker than asking that Whitney A hold for the pair $(X_{\mathrm{sm}}, Y)$ at $0$, which would require looking at all curves on $X$ passing through the origin.

We show how weak the infinitesimal Whitney A condition is by considering a family of examples due to Trotman (see Prop.\ 5.1, p.\ 147 in \cite{Trot}). In these examples, the members of the families are the same, but the total space is different. The examples were used to show that a necessary and sufficient fiberwise condition for Whitney A was impossible.

\begin{exa} Consider the family of plane curves with parameter $y$ given by $w^a-y^bv^c-v^d=0$, so $X_0$ is the curve defined by $w^a-v^d=0$ and $y=0$. Then the infinitesimal Whitney A fiber condition holds at $0$ if $b>1$, for all $a,c,d$, because on $X_0$ we have $\pd G{y}=0$. If $b=1$, the condition  holds if $c>\mathrm{min}\{d-d/a, d-1\}$. 

Indeed, let $\phi:(\mathbb C,0)\to(X_0,0)$ be a curve, and let $t$ be the generator for the maximal ideal of $\mathcal{O}_{\mathbb{C},0}$. Write $\phi^{*}(w)=t^{\alpha_1}w_{1}(t)$ and $\phi^{*}(v)=t^{\beta_1}v_{1}(t)$, where $w_{1}(t)$ and $v_{1}(t)$ are units in $\mathcal{O}_{\mathbb{C},0}$. Because $X_0$ is cut out by $w^a-v^d=0$, then $a\alpha_1=d\beta_1$. The infinitesimal Whitney A fiber condition holds if $\phi^{*}(v^c) \in t(\phi^{*}(w^{a-1}), \phi^{*}(v^{d-1}))$, or equivalently if $c\beta_1 > \mathrm{min} \{(a-1)\alpha_1,(d-1)\beta_1 \}$, which is the same as  $c>\mathrm{min}\{d-d/a, d-1\}$ because $\alpha_1 = \frac{d\beta_{1}}{a}$. 
\end{exa}
   
Preserve the setup from the beginning of the section. The following theorem is the main result of our paper.    
\begin{theorem} \label{main} Suppose $X$ and $X_y$ are equidimensional. Assume the singular locus of $X$ is $Y$. Suppose $f$ is a function on $X$ such that $f$ is a submersion on  $X-Y$ and $f(Y)=0$. Suppose  $\dim C(X,f)_0<n$, and the infinitesimal Whitney A fiber condition holds at $0$. Then $A_f$ holds for the pair $(X-Y,Y)$ at $0$.
\end{theorem}

\begin{proof} We need to show that $C(X,f)_0 \subset C(Y)$. 
By Theorem \ref{conormal fiber} we know the components of $ C(X,f)_0$ are of two types: the blow-up components $\pi(E_{\tilde{f}})_{0}$ and the join components $d\tilde{f}(0)*C(X)_0$ if $d\tilde{f}(0)\ne 0$. We will show that the irreducible components of  $\pi(E_{\tilde{f}})_0$ are contained in $C(Y)$, while the join components are controlled by the infinitesimal Whitney A fiber condition. 

We claim that $\pi(E_{\tilde{f}})$ is of pure dimension equal to  $\dim \mathrm{Bl}_{\mathrm{im} \ d\tilde{f}}T_{X}^{*}U -1 = n+k-1$, where $U$ is a neighborhood of $0$ in $\mathbb{C}^{n+k}$ that contains $X$. Indeed, by Lemma \ref{exceptional divisor} \rm{(i)} $\pi(E_{\tilde{f}})$ 
is isomorphic to $E_{\tilde{f}}$, and $E_{\tilde{f}}$ is of pure dimension $n+k-1$. 

Because the singular locus of $X$ is $Y$ and because $f$ is a submersion on $X-Y$, then $S(f) \subset Y$. Then by Lemma \ref{exceptional divisor} \rm{(ii)} $\pi(E_{\tilde{f}})$ is supported over $Y$. By assumption,  $\dim \pi(E_{\tilde{f}})_0 \leq n-1$. Hence, by upper semi-continuity $\dim \pi(E_{\tilde{f}})_y \leq n-1$ for each $y$ in a neighborhood of $0$. But $\dim Y = k$. Thus the dimension formula implies that the irreducible components of $\pi(E_{\tilde{f}})$ surject onto $Y$. Since the $A_f$ condition holds generically on $Y$, each irreducible component of $\pi(E_{\tilde{f}})$ is generically contained in $C(Y)$. Hence each such component lies in $C(Y)$. In particular, 
$\pi(E_{\tilde{f}})_0$ is contained in $C(Y)$. 

Now we turn to the join components. Consider a component $Z$ of the fiber of $C(X)_{0}$. Since  $\dim d\tilde{f}(0)*Z<n$, then $\dim Z<n-1$. Since $Y\subset f^{-1}(0)$, then $d\tilde{f}(0)\in C(Y)$. So it suffices to show that $Z\subset C(Y)$. Apply Theorem \ref{components} with 
$\mathcal{M}:=JM(X)$. Then $C$ is the conormal space $C(X)$. Also, $C_0$ is $C(X)|X_0$ and
$C(0)$ is the closed subscheme of $C(X)$ that consists of the closure of the pairs $(x,H)$ where $x$ is a smooth point of $X_0$ and $H$ is a tangent hyperplane to $X$ at $x$, and $r=n-1$. Since the dimension of $Z$ is less than $n-1$ then by Thm.\ \ref{components} it follows that $Z$ consists of limits of tangent hyperplanes at $0$ along curves on $X_0$. Thus the infinitesimal Whitney A fiber condition implies that $Z$ is in $C(Y)$.
\end{proof}

The usefulness of the last theorem rests on our ability to control the dimension of $\dim C(X,f)_0$ by numerical means. We give an example improving Thm.\ 5.8 from \cite{G-R} that shows how this works. For another example see Thm.\ 1.8.2 in  \cite{R-thesis}. Recall that $X\subset \mathbb{C}^{n+k}$ is a determinantal singularity if the ideal of $X$ is generated by the minors of fixed size of an $(l+q)\times l$ matrix with entries in $\cO_{n+k}$, and $X$ has the expected codimension. The matrix is called the presentation matrix of $X$ and is denoted  $M_X$.

Denote by $JM(G_y;f_y)$ the restriction to the fiber $X_y$ of the augmented Jacobian module $H$ as defined in the beginning of the section. Denote by $N_D(y)$ the module of first order infinitesimal deformations of $X_y$ coming from the deformations
of the presentation matrix $M_{X_y}$. Let $\Sigma^l$ be the $(l+q) \times l$ matrices of kernel rank $l$. Finally, define $e_{\Gamma}(JM(G_y;f_y),\cO_{n+k}\oplus N_D(y))$ to be the sum of the multiplicity of the pair of modules $(JM(G_y; f_y), \cO_{l+q}\oplus M^*_{ X_y}(JM(\Sigma^l)))$ and the intersection number of the image of $M_{X_y}$ with a polar of $\Sigma^l$ of complementary dimension to $n$.

\begin{theorem}\label{determinantal}   Suppose $(X,0) \rightarrow (Y,0)$ is a family of determinantal singularities with presentation matrix $M_{X}:\mathbb{C}^{n+k}\to \mathrm{Hom} (\mathbb{C}^l,\mathbb{C}^{l+q})$, defined by the maximal minors of $M_{X}$.
Suppose $X = G^{-1}(0)$,  where $G:({\mathbb{C}}^{n+k},0)\to({\mathbb{C}}^p,0)$ with $Y$ a smooth subset of $X$, coordinates  chosen so
that $0 \times \mathbb{C}^k  = Y$. Assume  $X$ is equidimensional with equidimensional fibers of the expected dimension and $X$ is reduced.

Suppose $f\:(X,0)\to (\mathbb{C},0)$ and set $Z=f^{-1}(0)$. Suppose the infinitesimal Whitney A fiber condition holds at $0$ if $f \not \in m_{Y}^2$.

A) Suppose $X_y$ and $Z_y$ are isolated
singularities, suppose the critical locus of $f$ is $Y$.
Suppose $e_{\Gamma}(JM(G_y;f_y),\cO_{n+k}\oplus N_D(y))$ is independent of $y$. 
 Then the union of the singular points
of $f_y$
 is $Y$, and the pair of strata
$(X-Y,Y)$ satisfies Thom's $A_f$ condition.

B) Suppose the critical locus of $f$
 is $Y$ or is empty, and  the pair $(X-Y,Y)$ satisfies Thom's $A_f$
condition.  Then   $e_{\Gamma}(JM(G_y;f_y),\cO_{n+k}\oplus N_D(y))$ is independent of $y$.\end{theorem}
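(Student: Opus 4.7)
The plan is to derive both directions from Theorem~\ref{main} by way of the principle of specialization of integral dependence (PSID) for Buchsbaum--Rim multiplicities, in the determinantal form developed in \cite{G-R}. The ambient module $\cO_{q+k}\oplus N_D(y)$ is the natural receptacle for measuring integral dependence of the Jacobian module of a determinantal singularity: the extra summand $N_D$ encodes the normal directions to the generic determinantal variety $D$, and together with $JM(G_y;f_y)$ it has the locally-direct-summand-off-$S$ property required by Theorem~\ref{components}.

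For part A, assume $e_{\Gamma}(JM(G_y;f_y),\cO_{q+k}\oplus N_D(y))$ is constant in $y$. The multiplicity--integral-dependence theorem for modules then forces any section of the ambient module that is integral on $JM(G_y;f_y)$ generically on $Y$ to remain integral at $y=0$; equivalently, no new component can be created in the fiber over $0$ of the $\mathrm{Proj}$ of the relative Rees algebra. Translated into dimensions this gives $\dim C(X,f)_0 < n$, for an extra $(n-1)$-dimensional component in that fiber would contribute positively to the Buchsbaum--Rim multiplicity and break constancy. The isolated-singularity hypotheses on $X_y$ and $Z_y$ supply the finiteness of $S$ over $Y$ needed to apply Theorem~\ref{components}. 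Combined with the infinitesimal Whitney A fiber condition, Theorem~\ref{main} then yields $A_f$ for $(X_{\mathrm{sm}},Y)$ at $(0,0)$. The fact that the critical set of $f_y$ stays at $Y$ follows from the same constancy: a critical point appearing off $Y$ near $0$ would also be registered as a drop of the multiplicity via the contribution of $JM(G_y;f_y)$ on the complement of $Y$.

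For part B the argument runs in reverse. By the equivalence $\mathrm{i})\Leftrightarrow\mathrm{iii})$ of the preceding proposition, $A_f$ gives $\pd H{y_j}\in\sdep{JM(H)}$ for each $j$. Via the standard bridge between strict dependence and integral dependence of pairs of modules, this yields relative integral dependence along $Y$ of the appropriate Rees algebras built from $JM(G_y;f_y)$ inside $\cO_{q+k}\oplus N_D(y)$. Combined with upper semi-continuity of the Buchsbaum--Rim multiplicity along $Y$, the converse half of the PSID then forces $e_{\Gamma}$ to be constant. The hypothesis that the critical locus is $Y$ or empty ensures the finite colength needed for the multiplicity to be defined.

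The main obstacle is checking that the specific determinantal setup from \cite{G-R}, with $\cO_{q+k}\oplus N_D(y)$ as ambient module, meshes cleanly with Theorem~\ref{components}: one must verify that the direct-summand-off-$S$ hypothesis survives the enlargement of the ambient module by $N_D(y)$, and that the dimension bound $\dim C(X,f)_0 < n$ really follows from constancy of $e_{\Gamma}$ alone rather than from the stronger conormal-space constancy exploited in earlier formulations. Once this bookkeeping is in place, both parts of the theorem reduce to applications of the multiplicity--integral-dependence dictionary combined with Theorem~\ref{main}.
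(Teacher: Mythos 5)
Your proposal follows essentially the same route as the paper, which itself only sketches the argument: constancy of $e_{\Gamma}(JM(G_y;f_y),\cO_{q+k}\oplus N_D(y))$ is shown in the cited work of Gaffney--Rangachev to rule out a $k$-dimensional polar variety of $JM(G;f)$, hence to force $\dim C(X,f)_0 < q$, after which Theorem \ref{main} (with the infinitesimal Whitney A fiber condition handling the join components when $df(0)\neq 0$) gives $A_f$, and part B is the standard converse of the multiplicity--integral-dependence dictionary. Your only misplaced worry is the direct-summand check for $\cO_{q+k}\oplus N_D(y)$: Theorem \ref{components} is invoked inside Theorem \ref{main} with $\mathcal{M}=JM(X)$, not with the enlarged determinantal module, so that bookkeeping issue does not arise.
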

\begin{proof}
The proof follows the lines of the proof of Thm.\ 5.8 in \cite{G-R}. It uses Thm.\ \ref{main} to cover the case when $f \not \in m_{Y}^2$. 

In \cite {G-R}, it is shown that constancy of  $e_{\Gamma}(JM(G_y;f_y),\cO_{n+k}\oplus N_D(y))$ implies that $H$ has no polar variety of dimension $k$. In turn this implies $C(X,f)_0$ has no component of dimension $n$ or more. If $f\in m^2_Y$, this implies the result of \cite {G-R}. If $f \not \in m^2_Y$, then it allows us to use Thm.\ \ref{main} to prove the above strengthening of the result of \cite {G-R}.
\end{proof}

In a similar way, Theorems 5.3, 5.4 of \cite {G1}, and Theorem 5.6 of \cite {G2} can be strengthened, dropping the hypothesis of $f\in m^2_Y$.

\end{document}